\documentclass[letterpaper]{article}
\usepackage{amsmath}
\usepackage{amsfonts}
\usepackage{amssymb}
\usepackage{amsthm}
\usepackage{cite}
\usepackage{graphicx}

\newcommand{\R}{\mathbb{R}}
\newcommand{\ddt}{\frac{\partial}{\partial t}}
\newcommand{\tr}{\operatorname{trace}}

\newtheorem{thm}{Theorem}

\title{Doubling a cube with positive scalar curvature}
\author{Haggai Nuchi}
\date{}


\begin{document}
\maketitle

\begin{abstract}
  In a 2013 paper (published in 2014 as \cite{gromov2014dirac}), Gromov proves that if smooth Riemannian metrics $g_i$ converge to a smooth Riemannian metric $g$ uniformly, and $g_i$ have scalar curvature uniformly bounded below, then $g$ shares the same scalar curvature lower bound. In some places in the paper, the proofs are only sketched. In this paper we explain one of those sketched steps in detail. Specifically, we prove that a cube (the product $[0,1]^n$) cannot have a Riemannian metric with positive scalar curvature, such that the faces are mean convex, and such that the dihedral angles along the edges are all acute.
  
  The proof is accomplished by taking such a metric, and producing from it a metric on the $n$--torus with positive scalar curvature, contradicting the Geroch conjecture.
\end{abstract}

\section{Introduction}\label{Sec:Intro}
  Gromov and Lawson proved in 1980 \cite{gromov1980spin} that if $X$ is a Riemannian manifold with nonnegative scalar curvature and mean convex boundary, then the double of $X$ (two copies of $X$ glued together along their common boundary) has a metric of nonnegative scalar curvature. Their method involved modifying the metric near the boundary in such a way that the boundary becomes totally geodesic, while keeping nonnegative scalar curvature. In Gromov's 2013 paper, he claims without proof that this procedure can be adapted to more general polyhedral domains, ie domains whose boundaries are not smooth manifolds but rather resemble polyhedra, and where instead of doubling a manifold, the polyhedral domains are developed by some reflection group. In this paper we restrict our attention to domains which are cube-shaped, rather than more general reflection domains, because cubes are what are necessary to prove the main theorem.
  
  We adapt Gromov and Lawson's method, and double a cube along each face successively to get a torus. We do this by expressing a cube as the intersection of $2n$ domains with smooth boundary. We double along the first such domain, then we double along the second such domain as a subdomain of the first doubling, then double along the third such domain as a subdomain of the second doubling, and so on.
  
  The result of this procedure is a torus, consisting of $2^{2n}$ copies of our original cube. The copies of the cube have their metric modified near the boundary so that they join one another smoothly. If our original cube has nonnegative scalar curvature, strictly mean convex faces, and strictly acute dihedral angles, then the metric on the resulting torus has nonnegative scalar curvature and some points of positive scalar curvature. This contradicts the Geroch conjecture, showing that such a cube cannot exist.
  
  In Section \ref{Sec:DoubleSmooth}, we explain Gromov and Lawson's doubling trick when the boundary is a smooth manifold. In Section \ref{Sec:DoubleCube} we explain the modification when we double a cube.
  
\section{Doubling with smooth boundary}\label{Sec:DoubleSmooth}
  The contents of this section contain ideas from Gromov--Lawson 1980. We also correct a very minor computation error of theirs.
  
  \begin{thm}[Theorem 5.7 in \cite{gromov1980spin}]\label{Thm:DoubleSmooth}
    Let $X$ be a Riemannian manifold with nonnegative scalar curvature and strictly mean convex boundary $\partial X$. Then the double of $X$ (two copies of $X$ glued together along their common boundary) has a metric of nonnegative scalar curvature (and has some points with positive scalar curvature).
  \end{thm}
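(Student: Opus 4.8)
The plan is to modify the metric $g$ on $X$ in a collar neighborhood $\partial X \times [0,\varepsilon)$ of the boundary so that, after the modification, the boundary becomes totally geodesic while scalar curvature stays nonnegative; once that is done, two copies can be glued along the (now totally geodesic, hence $C^\infty$-matchable to second order) boundary to produce a metric on the double that is at least $C^2$ and has nonnegative scalar curvature, with strict positivity somewhere because the mean convexity was strict. Concretely, I would write the metric near the boundary in normal form $g = dt^2 + g_t$, where $t$ is the distance to $\partial X$ and $g_t$ is the induced metric on the level set $\{t = \text{const}\}$. The second fundamental form of the level sets is $\mathrm{II}_t = -\tfrac12 \partial_t g_t$, and mean convexity of $\partial X$ says $H_0 = \tr_{g_0}\mathrm{II}_0 > 0$. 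The idea is to replace $t$ by a new "warped" coordinate: set $g = dt^2 + g_{\phi(t)}$ for a suitable reparametrizing function, or more flexibly bend the collar by choosing a new metric $\bar g = dt^2 + g_{f(t)}$ where $f$ is a smooth function with $f(0)=0$, $f'(0)=0$ (to make the boundary totally geodesic), $f' > 0$ for $t>0$, and $f(t) = t$ for $t \ge \varepsilon$ so the modification is supported in the collar.

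The key computation is the scalar curvature of such a warped collar metric in terms of the scalar curvature of the slices, the mean curvature, and the derivatives of $f$. Using the standard formula, one finds that the scalar curvature of $\bar g$ picks up terms like $f''(t)$ times the original mean curvature $H$ plus lower-order terms that are $O(1)$ and can be controlled, plus the intrinsic scalar curvature contributions which remain close to those of $g$ when $f$ is close to the identity. The crucial sign observation is that near $t=0$ we need $f''(0) > 0$ (to accelerate $f$ away from its critical value $f'(0)=0$), and since $H_0 > 0$ strictly, the potentially dangerous term $c\, f''(t) H(t)$ has a \emph{favorable} sign — it is the strict mean convexity that makes room for the bending. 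So I would choose $f$ with $f'' \ge 0$ throughout (convex), vanishing to the right order at $t=0$ and flattening out to slope $1$ by $t = \varepsilon$; then all newly introduced curvature terms are nonnegative or can be absorbed, and on $[\varepsilon, \cdot)$ nothing changed.

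More carefully, the order of steps: (1) fix Fermi coordinates on a collar and record $\mathrm{Sc}(g)$, $H_t$, $\mathrm{Sc}(g_t)$ in these coordinates; (2) posit the modified metric $\bar g = dt^2 + g_{f(t)}$ and derive $\mathrm{Sc}(\bar g)$ as a function of $f, f', f''$ and the geometry of the unmodified slices; (3) identify the constraints on $f$: smoothness, $f(0)=0$, enough vanishing of derivatives at $0$ that $\bar g$ is smooth and the boundary is totally geodesic, $f\equiv t$ near $t=\varepsilon$, and convexity $f'' \ge 0$; (4) verify that with such an $f$ (taking $\varepsilon$ small so that $H_t$ stays bounded below by a positive constant and $\mathrm{Sc}(g_t)$ stays controlled on the collar), $\mathrm{Sc}(\bar g) \ge 0$, strict somewhere; (5) glue two copies along $\partial X$: since the boundary is totally geodesic in each copy, the doubled metric is $C^{1,1}$ (indeed one can arrange $C^\infty$ by making $f$ even-order flat appropriately), and the scalar curvature inequality passes to the double. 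This is also where I would correct the minor computational slip alluded to in the text — presumably a constant or sign in the scalar curvature formula for the warped collar.

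The main obstacle, and the only place real care is needed, is Step (2)–(4): getting the scalar curvature formula for $\bar g = dt^2 + g_{f(t)}$ exactly right and then seeing that the strict inequality $H_0 > 0$ is precisely what lets a convex reparametrization $f$ exist with $\mathrm{Sc}(\bar g)\ge 0$. The subtlety is that $f''$ must be positive near $0$ (so that $f'$ climbs from $0$ up to $1$) but the integral constraint $\int_0^\varepsilon f' \,\le\, \varepsilon$ (roughly, since $f(\varepsilon)=\varepsilon$) forces $f''$ to be negative somewhere as well if we want $f\le t$; one must check that wherever $f'' < 0$, the term it multiplies is harmless (it multiplies $H_t \ge c > 0$, so $f'' H_t < 0$ there, which is bad) — this is resolved by instead keeping $f \ge t$ is impossible too, so the honest resolution is to allow a slightly longer collar or to let $f$ overshoot: one takes $f$ with $f(0)=0$, $f'(0)=0$, $f'' \ge 0$ on all of $[0,\varepsilon]$ so $f' \le 1$ and $f(t) \le t$, matching $f(t)=t-\delta$ to a shifted coordinate past the collar, i.e. one accepts that the doubled manifold's collar is reparametrized. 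Managing this bookkeeping — ensuring the sign of every curvature term and a clean $C^2$ (or smooth) gluing — is the crux; everything else is routine differential geometry of warped products.
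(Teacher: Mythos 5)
Your proposal and the paper take genuinely different routes to the same theorem. The paper (following Gromov--Lawson 1980) does not touch the metric of $X$ directly: it embeds $X'\times\{0\}$ (a slight inward deformation of $X$) into the product $X\times\R$, takes $D(X)$ to be the boundary of the $\varepsilon$-neighborhood, and computes the principal curvatures of this hypersurface. The $1/\varepsilon$ principal curvature in the ``circle direction,'' multiplied through the Gauss equation against the mean curvature of $\partial X'$, produces the dominant $\frac{1}{\varepsilon}\cos\theta\,H$ term that forces positive scalar curvature along the bend. Your proposal instead writes $g=dt^2+g_t$ in Fermi coordinates and deforms to $\bar g = dt^2 + g_{f(t)}$ with $f(0)=0$, $f'(0)=0$, and $f'' \ge 0$; the second-fundamental-form scaling $\bar A_t = f'(t)A_{f(t)}$ plus the Riccati/Gauss identities gives
\[
\operatorname{Sc}(\bar g)_t - \operatorname{Sc}(g)_{s=f(t)} = 2f''(t)H_{f(t)} + \bigl((f'(t))^2 - 1\bigr)\bigl[2\partial_s H - |A|^2 - H^2\bigr]_{s=f(t)},
\]
and $H>0$ together with $f''>0$ makes the first term favorable and lets it dominate the bounded second term on a short collar. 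These are the same curvature inequalities in disguise --- if you take $f(u)=\varepsilon\sin(u/\varepsilon)$ you recover exactly the Gromov--Lawson tube, with $f''=-\frac{1}{\varepsilon}\cos\theta$ reproducing the paper's $\frac{1}{\varepsilon}\cos\theta$ coefficient --- but your formulation is more flexible (any convex bending function works, not just the circular one), and it makes the role of strict mean convexity transparent: $H>0$ is precisely the coefficient of the one term you get to make large. The paper's version is more elementary (no ODE bookkeeping for $f$) and gets smoothness of the double for free since $D(X)$ is a smooth hypersurface of a smooth manifold, whereas you must arrange $f$ to be flat to infinite order at $t=0$ to get a $C^\infty$ doubled metric, as you note.

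The one place where your reasoning wanders is the $\int_0^\varepsilon f'\le\varepsilon$ discussion, but you do arrive at the correct fix: since $f'\le 1$ forces $f(\varepsilon)<\varepsilon$, the bent collar cannot rejoin the original metric at the original slice, so you must glue to a shifted copy $g_{t-\delta}$. This is exactly the role of the shrunken domain $X'$ in the paper --- the doubled manifold is built out of $X'$, not $X$, and the $\delta$-annulus between $\partial X'$ and $\partial X$ is where the bending lives. Framed that way, the constraint is not an obstacle at all: choose $X'$ first and let $\delta = \varepsilon - f(\varepsilon)$ be whatever it is. Apart from that detour, the proposal is a correct alternative proof; the only unproved claim is the precise scalar-curvature formula for $dt^2+g_{f(t)}$, and that is a standard computation whose sign you have right.
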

  \begin{proof}
    Let $X'\subset X$ be a subdomain of $X$ formed by shrinking the boundary of $X$ inward a tiny bit, but keeping $\partial X'$ still mean convex. Here is how we construct a metric of positive scalar curvature on the double of $X$.
    
    Consider $X'\times\{0\}$ as a submanifold of the Riemannian manifold $X\times\R$ with the product metric. Let $D(X)$ be the boundary of an $\varepsilon$--neighborhood of $X'\times\{0\}$. See Figure \ref{Fig:DX}. Now $D(X)$ is homeomorphic to the double of $X$. Moreover it has the same metric as $X$ on the set $X'\times\{\pm\varepsilon\}$. We claim that it has positive scalar curvature on the rounded off portions (ie away from $X'\times\{\pm\varepsilon\}$) when $\varepsilon$ is small enough.
    \begin{figure}[htbp]
      \includegraphics[page=1]{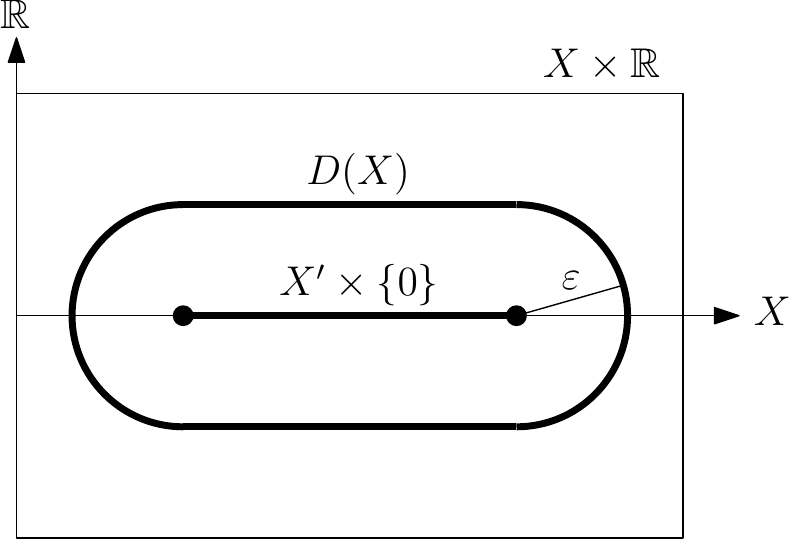}
      \centering
      \caption{$X'$ is a small inward deformation of $X$, and $D(X)$, the double of $X$, is an $\varepsilon$--neighborhood of $X'\times\{0\}$ in $X\times\R$.}
      \label{Fig:DX}
    \end{figure}
    
    Let $p\in D(X)$ lie at angle $\theta\in[0,\pi/2)$ from the horizontal. More precisely, the distance from the $X$--component of $p$ to $X'$ in $X$ is $\sqrt{\varepsilon^2-t^2}$, the $\R$--component of $p$ is $t$, and $\sin\theta=t/\varepsilon$. See Figure \ref{Fig:pDXtheta}. We compute the scalar curvature at $p$ of $D(X)$. To do that, we need to compute sectional curvatures of $D(X)$, and we use the Gauss formula for the sectional curvature of a submanifold of $X\times\R$.
    
    To compute sectional curvatures of $D(X)$, we compute its principal curvatures. Let $\gamma$ be the unit vector field along the geodesics formed by outward normal exponentiation from $\partial X'$. Let $N$ be the outward unit normal to $D(X)$. Then at $p$, $N$ is given by $N=\cos\theta\gamma + \sin\theta\ddt$. The shape operator of $D(X)$ at $p$ splits as $\cos\theta$ times the shape operator of the $\varepsilon\cos\theta$--expansion of $\partial X'$, and the shape operator of the circle of radius $\varepsilon$ in the $\theta$ direction.
    \begin{figure}[htbp]
      \includegraphics[page=2]{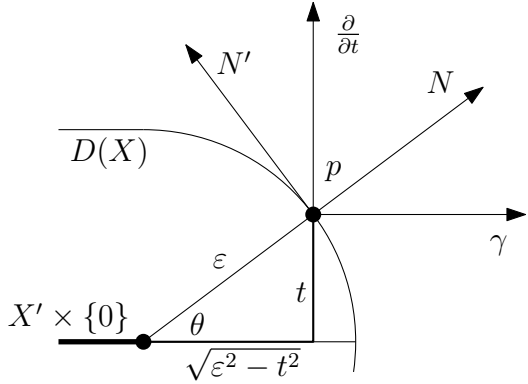}
      \centering
      \caption{A point $p$ in $D(X)$.}
      \label{Fig:pDXtheta}
    \end{figure}
    
    Therefore, if the principal curvatures of $\partial X'$ in $X$ are given by $\mu_1,\dots,\mu_{n-1}$, then the principal curvatures of $D(X)$ are given by $\lambda_0=1/\varepsilon$ in the direction $N'=-\sin\theta\gamma+\cos\theta \ddt$, and $\lambda_i=\cos\theta(\mu_i + O(\varepsilon))$ for $i=1,\dots,n-1$. The latter are in directions orthogonal to the $\R$ factor. Here $O(\varepsilon)$ represents a quantity less than a constant multiple of $\varepsilon$ as $\varepsilon$ gets arbitrarily small, and this expression follows from the fact that the metric is $C^1$; the principal curvatures don't change much as we move from $\partial X'$ to a small outward expansion of $\partial X'$.
    
    The Gauss equation tells us that the sectional curvature of a hypersurface, in directions corresponding to principal curvatures, is the ambient sectional curvature plus the product of the principal curvatures.
    
    For sectional curvatures of $D(X)$ in the direction of two principal curvatures tangent to $X$, we compute
    \[ K_{D(X)}(e_i,e_j)=K_X(e_i,e_j)+\cos^2\theta(\mu_i\mu_j+O(\varepsilon)).\]
    For sectional curvatures of $D(X)$ with one direction $N'$ and one direction tangent to $X$, we compute (using the fact that $X\times\R$ has the product metric)
    \begin{align*}
      K_{D(X)}(N',e_i) &= K_X(-\sin\theta\gamma,e_i) + \frac{1}{\varepsilon}\cos\theta(\mu_i+O(\varepsilon)) \\
      &= \sin^2\theta K_X(\gamma,e_i)+ \frac{1}{\varepsilon}\cos\theta(\mu_i+O(\varepsilon)) \\
      &= (1-\cos^2\theta) K_X(\gamma,e_i)+ \frac{1}{\varepsilon}\cos\theta(\mu_i+O(\varepsilon)) \\
      &= K_X(\gamma,e_i) + \cos\theta\left(\frac{1}{\varepsilon}\mu_i + O(1)\right).
    \end{align*}
    Now we compute the scalar curvature of $D(X)$ by adding together the sectional curvatures:
    \[ \kappa_{D(X)}=\kappa_X + \cos\theta \left( \frac{1}{\varepsilon}H + O(1)\right) \]
    Here $H=\sum \mu_i$ is the mean curvature of $\partial X'$, which we assume to be positive. Therefore for small enough $\varepsilon$, we have $\frac{1}{\varepsilon}H + O(1)>0$, and since we assume that $\kappa_X \geq 0$, we have $\kappa_{D(X)} \geq 0$.
  \end{proof}
  
\section{Doubling a cube}\label{Sec:DoubleCube}
  Performing the process from Section \ref{Sec:DoubleSmooth} on a cube will lead to a manifold which is topologically a sphere. We need to alter the procedure in a way that leads to a smooth metric on a torus. The process from the previous section changes the metric of $X$ on a small neighborhood of the boundary, in such a way that the boundary becomes totally geodesic. Consider a cube $X^n$ which is embedded inside a larger Riemannian manifold $\overline{X}^n$, and suppose that $X$ is the intersection of disks $X_1,\dots,X_{2n}$ with smooth boundary. The disks $X_i$ are chosen so that each boundary $\partial X_i$ contains one of the faces of $X$. See Figure \ref{Fig:CubeFromDisks}.
  \begin{figure}[htpb]
    \centering
    \includegraphics[page=3]{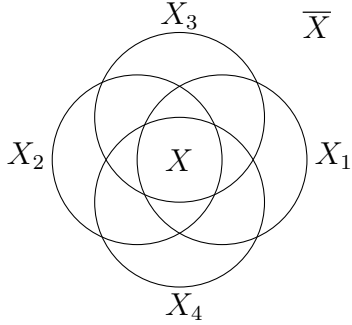}
    \caption{The $n$--dimensional cube $X\subset \overline{X}$ as an intersection of $2n$ disks $X_1,\dots,X_{2n}$, here with $n=2$.}
    \label{Fig:CubeFromDisks}
  \end{figure}
  
  We modify the notation of the last section slightly. If $X$ is a subdomain of $\overline{X}$, let $D(X,\overline{X})$ be the boundary of an $\varepsilon$-neighborhood of $X\times\{0\}$ in $\overline{X}\times\R$.
  
  We will double along each $X_i$ as follows. Let $Y_1=D(X_1,\overline{X})$. Now replace $X_j$, for $j=2,\dots,2n$, by $_1X_j=(X_j\times\R)\cap Y_1$. Now repeat, so that inductively we have
  \[ Y_{i+1} = D(_iX_{i+1}, Y_i), \]
  and again we replace the submanifolds $_iX_j$, for $j=i+2,\dots,2n$, by $_{i+1}X_j = (_iX_j\times\R)\cap Y_{i+1}$. Similarly, let $_1X$ be the subset $\bigcap_{j=2}^{2n} {_1X_j}$, and let $_iX$ be $\bigcap_{j=i+1}^{2n} {_iX_j}$. See Figure \ref{Fig:DoubleSquare}.
  \begin{figure}[htbp]
    \centering
    \includegraphics[page=4]{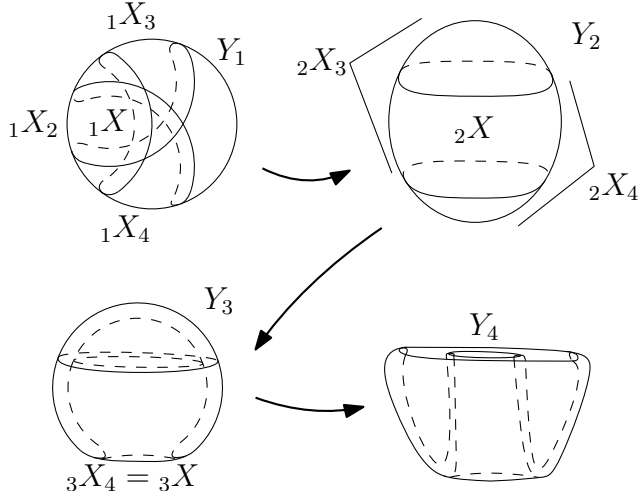}
    \caption{Doubling a square four times; the fourth stage $Y_4$ is diffeomorphic to a torus.}
    \label{Fig:DoubleSquare}
  \end{figure}
  
  The result of doubling along each face, or $Y_{2n}$, is topologically a torus. It contains $2^{2n}$ copies of $X$, doubling the number of copies with each inductive doubling step. The metric on these copies of $X$ is identical to the original metric away from the boundary, and is modified so that the boundaries between the copies is totally geodesic.
  
  We now prove the following:
  \begin{thm}\label{Thm:DoubleCube}
    Let $X$ be a cube as described above, and satisfying the following properties:
    \begin{enumerate}
      \item $X$ has nonnegative scalar curvature.
      \item Each face of $X$ is strictly mean convex.
      \item The dihedral angles between the faces of $X$ are strictly acute.
    \end{enumerate}
    Then the torus $Y_{2n}$ has nonnegative scalar curvature, and has positive scalar curvature at some points.
  \end{thm}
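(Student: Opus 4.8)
The plan is to run an induction over the $2n$ doubling steps. Write ${}_0X=X$, ${}_0X_j=X_j$, $Y_0=\overline X$, and suppose inductively that at stage $i$: (a) $Y_i$ has nonnegative scalar curvature, with some positive points once $i\ge1$; (b) every remaining face $\partial({}_iX_j)$ with $j>i$ is strictly mean convex in $Y_i$; and (c) every dihedral angle of ${}_iX$ between two remaining faces is strictly acute. At stage $0$ these are assumptions (1)--(3) on the cube --- for (a) one only needs the disks $X_i$, hence the collars that will be created, to carry nonnegative scalar curvature, which can be arranged by taking the $X_i$ close to $X$ and extending the metric via a Gromov--Lawson collar using the strict inequality in (2). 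The step $Y_i\rightsquigarrow Y_{i+1}=D({}_iX_{i+1},Y_i)$ is exactly the construction in the proof of Theorem~\ref{Thm:DoubleSmooth}, applied to the smooth subdomain ${}_iX_{i+1}\subset Y_i$; since ${}_iX_{i+1}$ is strictly mean convex in $Y_i$ by (b), the Gauss-equation computation there shows $Y_{i+1}$ has nonnegative scalar curvature, positive on the new collar. So the only real work is to re-establish (b) and (c) at stage $i+1$.

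For (c): a dihedral angle of ${}_{i+1}X$ between remaining faces $j,k>i+1$ is the angle between $\gamma_j$ and $\gamma_k$ along the codimension-two stratum $\partial({}_iX_j)\cap\partial({}_iX_k)$, and the $(i{+}1)$st doubling changes the metric only inside a thin collar about $\partial({}_iX_{i+1})$. That collar meets the stratum only near the corner where faces $i{+}1$, $j$, $k$ all meet, and there the doubling acts, to leading order, as an isometric reflection across $\partial({}_iX_{i+1})$; hence the angle moves by at most $O(\varepsilon)$ and, for $\varepsilon$ small, stays bounded away from $\pi/2$. So (c) persists.

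The heart of the proof is (b). Fix $j>i+1$ and let $\Sigma=\partial({}_{i+1}X_j)$. Away from the new collar, $\Sigma$ is isometric to the corresponding part of $\partial({}_iX_j)$ in $Y_i$ and so is strictly mean convex by induction. On the collar --- which meets $\Sigma$ only near the edge $\partial({}_iX_{i+1})\cap\partial({}_iX_j)$ --- I would compute the mean curvature of $\Sigma$ in $Y_{i+1}$ (with respect to the outward normal) exactly as in Theorem~\ref{Thm:DoubleSmooth}: write the outward normal of $\Sigma$ in the principal frame of the $\varepsilon$-tube $Y_{i+1}$, so that its component along the direction $N'$ (the one with principal curvature $1/\varepsilon$) supplies the dominant term. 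With $\alpha$ the dihedral angle between faces $i{+}1$ and $j$, and $\theta\in[0,\pi/2)$ the tube angle, this gives
\[
 H_\Sigma^{Y_{i+1}} \;=\; \frac{\cos\theta\,\cot\alpha}{\varepsilon}\,c(\theta,\alpha)\;+\;O(1),
\]
where $c(\theta,\alpha)>0$ is bounded and the $O(1)$ term is bounded uniformly as $\varepsilon\to0$ and, near the flat part, reduces to the original (strictly positive) mean curvature of face $j$. Because $\alpha$ is acute, $\cot\alpha>0$, so the leading term is \emph{positive} --- this is the one place hypothesis (3) enters. Where $\theta$ stays away from $\pi/2$, the $\tfrac1\varepsilon$ term dominates the bounded correction once $\varepsilon$ is small; where $\theta$ is near $\pi/2$, the correction is by continuity close to the positive mean curvature of face $j$ and hence still positive; and on the flat part it simply is that mean curvature. (Near lower-dimensional strata, where $\Sigma$ lies in several collars at once, the same estimate applies to each and the positive $\tfrac1\varepsilon$ contributions add.) Thus $\Sigma$ is strictly mean convex in $Y_{i+1}$ and the induction closes; after $2n$ steps $Y_{2n}$ is a closed manifold --- the torus --- carrying a metric of nonnegative scalar curvature that is positive on the collars, which is the assertion of the theorem. (Combined with the Geroch conjecture this gives the nonexistence statement of the abstract, but that deduction is not part of the present proof.)

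The step I expect to be the main obstacle is exactly this collar computation, together with the matching across the collar-to-flat transition: one has to pin down the leading $\tfrac1\varepsilon$ coefficient precisely enough that its sign is governed by $\cot\alpha$ (so that \emph{acuteness}, and not merely nonobtuseness, is what makes the construction run), and one has to bound the lower-order terms uniformly in $\varepsilon$ so that strict mean convexity survives in the thin zone where the $\varepsilon$-tube rounds off into the two flat copies. Everything else is an appeal to Theorem~\ref{Thm:DoubleSmooth} or bookkeeping of copies of $X$ and of collars.
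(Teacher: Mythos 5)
Your overall strategy is the same as the paper's: induct over the $2n$ doublings, checking that (a) nonnegative scalar curvature, (b) strict mean convexity of the remaining faces, and (c) strictly acute dihedral angles between them persist at each stage, with (a) supplied by Theorem~\ref{Thm:DoubleSmooth} once (b) holds. Your sketch of (b) also has the right shape: the paper's exact formula (with $n=\pi^*\gamma_2$) is
\[
|n|\,H_{\tilde p}=H_p+\cos\theta\Bigl(-\langle\gamma_2,\gamma_1\rangle\,C\,\tfrac1\varepsilon+\cos\theta\,O(1)\Bigr),\qquad C=\tfrac{1-\langle\gamma_2,\gamma_1\rangle^2}{1-\cos^2\theta\,\langle\gamma_2,\gamma_1\rangle^2}>0,
\]
which matches your $\tfrac{\cos\theta\cot\alpha}{\varepsilon}c(\theta,\alpha)+O(1)$ up to bookkeeping, and your three-regime discussion of $\theta$ is the right way to read it.

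There is, however, a genuine gap in your argument for (c). You claim the doubling acts ``to leading order as an isometric reflection'' on the collar, so the dihedral angle between the remaining faces moves only by $O(\varepsilon)$. That is false: inside the transition zone $0\le\theta<\pi/2$ of the $\varepsilon$-tube, the new outward normals to $\partial({}_1X_j)$ and $\partial({}_1X_k)$ are $\pi^*\gamma_j$ and $\pi^*\gamma_k$, and the paper computes
\[
\langle\pi^*\gamma_j,\pi^*\gamma_k\rangle=\langle\gamma_j,\gamma_k\rangle-\cos^2\theta\,\langle\gamma_j,\gamma_1\rangle\langle\gamma_k,\gamma_1\rangle.
\]
The correction term is of order $1$, not $O(\varepsilon)$ --- for instance at the equator $\theta=0$ the new normals are the orthogonal projections of $\gamma_j,\gamma_k$ onto $\gamma_1^\perp$, and the angle between those projections can differ substantially from the original dihedral angle. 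What saves the argument is not smallness but sign: acuteness of the angles with face $1$ gives $\langle\gamma_j,\gamma_1\rangle<0$ and $\langle\gamma_k,\gamma_1\rangle<0$, so the correction is $\le 0$ and $\langle\pi^*\gamma_j,\pi^*\gamma_k\rangle\le\langle\gamma_j,\gamma_k\rangle<0$. So the acuteness hypothesis is used a second, essential time here (beyond its role in (b)), and an explicit computation of the new normals --- not a reflection heuristic --- is required to close the induction.
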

  It follows immediately from the Geroch Conjecture that no cube satisfying these hypotheses can exist.
  \begin{proof}
    We only have to check that the hypotheses of Theorem \ref{Thm:DoubleSmooth} are satisfied at each inductive step. In other words, we need to check that if the boundary of $_iX_j$ is mean convex as a face of $_iX$, then the boundary of $_{i+1}X_j$ is mean convex as a face of $_{i+1}X$, and that if $_iX_j$ and $_iX_k$ have an acute dihedral angle then so do $_{i+1}X_j$ and $_{i+1}X_k$. The conclusion of Theorem \ref{Thm:DoubleSmooth} ensures that nonnegative scalar curvature is preserved between inductive steps. A point of positive curvature is created in the first step and is preserved.
    
    For simplicity of notation, we check only for the first doubling: we show that if the boundary of $X_j$ is mean convex as a face of $X$, then so is the boundary of $_1X_j$ as a face of $_1X\subseteq Y_1$, and we show that if the dihedral angle between $X_j$ and $X_k$ is strictly acute, then so is the angle between $_1X_j$ and $_1X_k$. The other inductive steps are identical.
    
    Recall that $D(X_1,\overline{X})$ is the boundary of the $\varepsilon$--neighborhood of $X_1\times\{0\}$ in $\overline{X}\times\R$. Let $\tilde{p}=(p,t)\in D(X_1,\overline{X})\subseteq \overline{X}\times\R$ lie at angle $\theta$ from the horizontal in $\overline{X}\times\R$, ie $\sin\theta=t/\varepsilon$. That means that $p\in\overline{X}$ lies at distance $\varepsilon\cos\theta$ from $\partial X_1$.
    
    Suppose further that $p\in\partial X_2$, so that $\tilde{p}\in \partial _1X_2$. We will show that if the mean curvatures of $\partial X_1$ and $\partial X_2$ are positive, and $\varepsilon$ is small enough, then the mean curvature of $\partial _1X_2$ is positive too. See Figure \ref{Fig:tildepDX1}.
    \begin{figure}[htpb]
      \includegraphics[page=5]{figs-cubedoubling}
      \centering
      \caption{A point $\tilde{p}$ in $D(X_1,\overline{X})$.}
      \label{Fig:tildepDX1}
    \end{figure}
    
    Let $\gamma_i$ be the unit vector field tangent to outward geodesics normal to $\partial X_i$, $i=1,2$. These are defined in a neighborhood of $\partial X_i$. For $\varepsilon$ small enough, they are defined at $p$. Moreover, when dihedral angles are acute, $\langle \gamma_1,\gamma_2 \rangle < 0$ along the intersection of the two boundaries, and the inequality holds also at $p$ for small enough $\varepsilon$. We use $\langle \cdot,\cdot \rangle$ to denote the metric both on $\overline{X}$ and on $\overline{X}\times\R$ and rely on context to distinguish the two.
    
    Let $N$ be the outward unit normal to $Y_1 = D(X_1,\overline{X})$:
    \[ N = \cos\theta \gamma_1 + \sin\theta \ddt \]
    Let $\pi:T_{\tilde{p}}Y_1\to T_p \overline{X}$ be projection to the first factor, ie $\pi(v,c\ddt) = v$. A straightforward computation shows that $\pi^{-1}(v) = v - \cot\theta \ddt$. (We need only compute that $\pi^{-1}(v)$ has the correct $t$--component to make $\langle \pi^{-1}(v),N \rangle = 0$.)
    
    Let $\pi^*:T_p \overline{X}\to T_{\tilde{p}}Y_1$ be the adjoint of $\pi$. We will make several uses of this map in the computations that follow. We compute
    \begin{align*}
      \pi^*(v) &= v - \cos^2\theta \langle v,\gamma_1 \rangle\gamma_1 - \cos\theta\sin\theta \langle v, \gamma_1 \rangle \ddt \\ 
      &= v - \cos\theta \langle v, \gamma_1 \rangle N
    \end{align*}
    From this we compute
    \[ \pi\circ\pi^*(v) = v - \cos^2\theta \langle v,\gamma_1 \rangle \gamma_1, \]
    and from that we have
    \[ \pi\circ\pi^* = I - \cos^2\theta \langle \cdot, \gamma_1 \rangle \gamma_1. \]
    
    We now compute the mean curvature of $\partial _1X_2$ at $\tilde{p}$. Note that $\gamma_2$ is the outward normal to $\partial X_2$ at $p$, and hence $\pi^*(\gamma_2)$ is an outward normal to $\partial _1X_2$ at $\tilde{p}$. That is because tangent vectors to $\partial _1X_2$ are of the form $\pi^{-1}(v)$ for $v\in T_p(\partial X_2)$, and we have
    \begin{align*}
      \langle \pi^*(\gamma_2) , \pi^{-1}(v) \rangle &= \langle \gamma_2, \pi\pi^{-1}(v) \rangle \\
      &= \langle \gamma_2, v \rangle \\
      &= 0.
    \end{align*}
    Thus $\pi^*(\gamma_2)$ is orthogonal to all tangent vectors of $T_{\tilde{p}}(\partial _1X_2)$, and we denote this vector by $n$.
    
    We take an arbitrary orthonormal basis $\{e_i\}$ of $T_{\tilde{p}}(\partial _1X_2)$ and compute the mean curvature as:
    \[ H_{\tilde{p}} = \sum \left\langle \nabla_{e_i} \frac{n}{|n|}, e_i \right\rangle \]
    Here the connection is the Levi-Civita connection on $Y_1$, but equivalently we  use the connection on $\overline{X}\times \R$ since we are taking the inner product with $e_i$ which is tangent to $Y_1$.
    
    From $\langle n, e_i \rangle = 0$ we have
    \[ H_{\tilde{p}} = \frac{1}{|n|}\sum \langle \nabla_{e_i} n, e_i \rangle. \]
    Combined with our expression for $n=\pi^*(\gamma_2)$, we have an expression for a positive multiple of $H_{\tilde{p}}$:
    \begin{align*}
      |n|\ H_{\tilde{p}} &= \sum \langle \nabla_{e_i} \gamma_2, e_i \rangle \ + \  \sum \langle \nabla_{e_i}(-\cos\theta \langle \gamma_2, \gamma_1 \rangle N), e_i \rangle \\
      &= \mathbf{(1)} + \mathbf{(2)}
    \end{align*}
    We now evaluate (1) and (2) above separately, and show that their sum is positive when $\varepsilon$ is small enough.
    
    First we evaluate (1). Let $S:T_p X_2 \to T_p X_2$ be the shape operator
    \[ S(v) = \nabla_v \gamma_2, \]
    and note that the mean curvature $H_p$ of $X_2$ at $p$ is given by $\tr S$. Note that $S(v)$ is well-defined for $v\in T_p X_2$ if we take the tangential component afterward. (1) is almost but not quite equal to $\tr S$; the set $\{e_i\}$ is an orthonormal basis of $T_{\tilde{p}}(_1X_2)$, not of $T_pX_2$. Because $\nabla$ is the Levi-Civita connection associated with the product metric on $\overline{X}\times \R$, we have $\nabla_{\ddt} \gamma_2 = 0$, and also $\nabla_{e_i}\gamma_2$ has no $t$--component. Therefore we have
    \begin{align*}
      \mathbf{(1)} &= \sum \langle \nabla_{e_i} \gamma_2, e_i \rangle \\
      &= \sum \langle \nabla_{\pi(e_i)} \gamma_2, \pi(e_i) \rangle \\
      &= \sum \langle \pi^*(\nabla_{\pi(e_i)} \gamma_2), e_i \rangle \\
      &= \tr (\pi^*\circ S \circ \pi) \\
      &= \tr (S \circ \pi \circ \pi^* ) \\
      &= \tr (S \circ (I - \cos^2\theta \langle \cdot, \gamma_1 \rangle \gamma_1)) \\
      &= \tr(S) - \cos^2\theta \langle \nabla_{\gamma_1}\gamma_2, \gamma_1 \rangle \\
      &= H_p - \cos^2\theta \langle \nabla_{\gamma_1}\gamma_2, \gamma_1 \rangle.
    \end{align*}
    
    Now we evaluate (2). From $\langle N, e_i \rangle = 0$ we have
    \begin{align*}
      \mathbf{(2)} &= \sum \langle \nabla_{e_i}(-\cos\theta \langle \gamma_2, \gamma_1 \rangle N), e_i \rangle \\
      &= -\cos\theta \langle \gamma_2, \gamma_1 \rangle \sum \langle \nabla_{e_i} N, e_i \rangle.
    \end{align*}
    We now observe that $\sum \langle \nabla_{e_i} N, e_i \rangle$ is very nearly the mean curvature $H_{\tilde{p},Y_1}$ at $\tilde{p}$ of $Y_1$ in $\overline{X}\times \R$. An orthonormal basis for $T_{\tilde{p}}Y_1$ is $\{e_i, n/|n|\}$, so in fact we have:
    \begin{align*}
      \mathbf{(2)} &= -\cos\theta \langle \gamma_2, \gamma_1 \rangle ( H_{\tilde{p},Y_1} - \langle \nabla_{n/|n|} N, n/|n| \rangle )
    \end{align*}
    From the computations in the  previous section, we have
    \[ H_{\tilde{p},Y_1} = \frac{1}{\varepsilon}+\cos\theta(H_{p,X_1}+O(\varepsilon)), \]
    where $H_{p,X_1}$ is the mean curvature of the boundary of $X_1$ in $\overline{X}$ at the point nearest $p$. Moreover, $\langle \nabla_{n/|n|} N, n/|n| \rangle = \langle S_{Y_1}(n/|n|), n/|n| \rangle$, where $S_{Y_1}$ is the shape operator of $Y_1$ at $\tilde{p}$. Decompose $n/|n|$ into its components in the $\frac{1}{\varepsilon}$--eigenvector direction and the complementary directions. The unit $\frac{1}{\varepsilon}$--eigenvector is $N'=-\sin\theta\gamma_1 + \cos\theta\ddt$, so we write
    \[ \frac{n}{|n|} = \langle n/|n|, N' \rangle N' + v, \]
    where $v\in T_{\tilde{p}}Y_1$ is orthogonal to $N'$ and $|v|\leq 1$. Then we have
    \begin{align*}
      \langle \nabla_{n/|n|} N, n/|n| \rangle &= \langle S_{Y_1}(n/|n|), n/|n| \rangle \\
      &= \langle S_{Y_1}(\langle n/|n|, N' \rangle N' + v), n/|n| \rangle \\
      &= \langle n/|n|, N' \rangle \langle S_{Y_1}(N'), n/|n| \rangle + \langle S_{Y_1}(v), n/|n| \rangle \\
      &= \langle n/|n|, N' \rangle^2 \frac{1}{\varepsilon} + \cos\theta M,
    \end{align*}
    where $M$ is some quantity less than $|S_{\partial X_1}| + O(\varepsilon)$; that's because $v$ is tangent to the normal $\varepsilon$--deformation of $\partial X_1$, and hence $S_{Y_1}(v)$ can be only as large as the norm of the shape operator of $\partial X_1$ (plus $\varepsilon$).
    
    Putting the pieces together, we have
    \begin{align*}
      \mathbf{(2)} &= -\cos\theta \langle \gamma_2, \gamma_1 \rangle ( H_{\tilde{p},Y_1} - \langle \nabla_{n/|n|} N, n/|n| \rangle ) \\
      &= -\cos\theta \langle \gamma_2, \gamma_1 \rangle \left( \frac{1}{\varepsilon}+\cos\theta(H_{p,X_1}+O(\varepsilon)) - \langle n/|n|, N' \rangle^2 \frac{1}{\varepsilon} - \cos\theta M \right) \\
      &= -\cos\theta \langle \gamma_2, \gamma_1 \rangle \left( \frac{1}{\varepsilon}(1-
      \langle n/|n|, N' \rangle^2) + \cos\theta\ O(1)\right) \\
      &= -\cos\theta \langle \gamma_2,\gamma_1 \rangle \left( C\frac{1}{\varepsilon} + \cos\theta\ O(1)\right),
    \end{align*}
    where $C\in[0,1]$ by the Cauchy-Schwarz inequality. In fact we have
    \begin{align*}
      C &= 1 - \langle n/|n|, N' \rangle^2 \\
      &=1-\left\langle\frac{\gamma_2-\cos\theta\langle\gamma_2,\gamma_1\rangle N}{|\gamma_2-\cos\theta\langle\gamma_2,\gamma_1\rangle N|},N'\right\rangle^2\\
      &= 1 - \frac{\langle \gamma_2,N' \rangle^2}{|\gamma_2-\cos\theta\langle\gamma_2, \gamma_1\rangle N|^2}\\
      &= 1 - \frac{\sin^2\theta \langle \gamma_2,\gamma_1 \rangle^2}{1-\cos^2\theta \langle \gamma_2,\gamma_1 \rangle^2}\\
      &= \frac{1-\langle \gamma_2,\gamma_1 \rangle^2}{1-\cos^2\theta \langle \gamma_2,\gamma_1 \rangle^2},
    \end{align*}
    which is strictly greater than 0 (independent of $\theta$) if we're assuming that $\partial X_1$ and $\partial X_2$ are transverse (and hence that $\langle \gamma_2,\gamma_1 \rangle \neq \pm 1$). Therefore $C > 0$.
    
    Finally, we have
    \begin{align*}
      |n|H_{\tilde{p}} &= \mathbf{(1)}+\mathbf{(2)} \\
      &= H_p - \cos^2\theta \langle \nabla_{\gamma_1}\gamma_2, \gamma_1 \rangle -\cos\theta \langle \gamma_2,\gamma_1 \rangle \left( C\frac{1}{\varepsilon} + \cos\theta\ O(1)\right) \\
      &= H_p + \cos\theta \left( -\langle \gamma_2,\gamma_1 \rangle C \frac{1}{\varepsilon} + \cos\theta\ O(1) \right).
    \end{align*}
    We assume that the dihedral angle between $\partial X_1$ and $\partial X_2$ is acute, and therefore $\langle \gamma_2,\gamma_1 \rangle < 0$. Therefore for small enough $\varepsilon$, we have $-\langle \gamma_2,\gamma_1 \rangle C \frac{1}{\varepsilon} + \cos\theta\ O(1) > 0$, and since we also assume $H_p > 0$, we have $H_{\tilde{p}} > 0$.
    
    It only remains to show that if $X_2$ and $X_3$ have an acute dihedral angle, then so do $_1X_2$ and $_1X_3$. We assume $\langle \gamma_2,\gamma_3 \rangle < 0$, and wish to show that the normal vectors to $_1X_2$ and $_1X_3$ have inner product less than zero. Those normal vectors are given by $\pi^*(\gamma_2)$ and $\pi^*(\gamma_3)$ respectively. We have:
    \begin{align*}
      \langle \pi^*(\gamma_2), \pi^*(\gamma_3) \rangle &= \langle \gamma_2, \pi\pi^*(\gamma_3) \rangle \\
      &= \langle \gamma_2, \gamma_3 - \cos^2\theta \langle \gamma_3, \gamma_1 \rangle \gamma_1 \rangle \\
      &= \langle \gamma_2,\gamma_3 \rangle - \cos^2\theta \langle \gamma_3,\gamma_1 \rangle \langle \gamma_2,\gamma_1 \rangle \\
      &\leq \langle \gamma_2, \gamma_3 \rangle \\
      &< 0.
    \end{align*}
    That concludes the proof.
  \end{proof}

\bibliographystyle{plain}
\bibliography{cubedoubling.bib}

\begin{thebibliography}{1}

\bibitem{gromov1980spin}
Mikhael Gromov and H~Blaine Lawson.
\newblock Spin and scalar curvature in the presence of a fundamental group.
  {I}.
\newblock {\em Annals of Mathematics}, pages 209--230, 1980.

\bibitem{gromov2014dirac}
Misha Gromov.
\newblock Dirac and plateau billiards in domains with corners.
\newblock {\em Central European Journal of Mathematics}, 12(8):1109--1156,
  2014.

\end{thebibliography}

\end{document}